\documentclass[a4paper,10pt]{article}
\usepackage[utf8x]{inputenc}
\usepackage{amsmath,amssymb, amsthm}
\usepackage{enumerate}
\usepackage{verbatim}

\newcommand{\N}{\mathbb{N}}
\newcommand{\Z}{\mathbb{Z}}

\newcommand{\R}{\mathbb{R}}
\newcommand{\C}{\mathbb{C}}

\newcommand{\OreGen}{R[x;\sigma,\delta]}

\newcommand{\DiffPol}{R[x;\identity_R,\delta]}

\DeclareMathOperator{\identity}{id}
\DeclareMathOperator{\Cen}{Cen}

% Less space before mod-parenthisis
\makeatletter
\def\imod#1{\allowbreak\mkern10mu({\operator@font mod}\,\,#1)}
\makeatother

\theoremstyle{plain}
\newtheorem{theorem}{Theorem}[section]
\newtheorem{lemma}[theorem]{Lemma}
\newtheorem{remark}[theorem]{Remark}
\newtheorem{proposition}[theorem]{Proposition}
\newtheorem{corollary}[theorem]{Corollary}
\newtheorem{definition}[theorem]{Definition}

%opening
\title{Centralizers and pseudo-degree functions }
\author{Johan Richter % 
\footnote{M\"{a}lardalen University, Academy of Education, Culture and Communication, Box 883, 721 23 V\"{a}ster\aa s, Sweden E-mail: johan.richter@mdh.se }
}

\begin{document}

\maketitle

\begin{abstract}

This paper generalizes a proof of certain results by Hellstr\"{o}m and Silvestrov \cite{ergodipotent} on centralizers in graded algebras. We study centralizers in certain algebras with valuations. We prove that the centralizer of an element in these algebras is a free module over a certain ring. Under further assumptions we obtain that the centralizer is also commutative.  \\

\noindent \textbf{Keywords:} Commuting elements, Valuations, Algebraic Dependence

\noindent \textbf{Mathematical Subject Classification 2010:} 16S36, 16U70
\end{abstract}

\section{Introduction}

The British mathematicians Burchnall and Chaundy studied, in a series of papers in the 1920s and 30s \cite{BurchnallChaundy1,BurchnallChaundy2, BurchnallChaundy3}, the properties of commuting pairs of ordinary differential operators. The following theorem is essentially found in their papers. 
\begin{theorem}\label{thm_BC}
 Let $P=\sum_{i=0}^n p_i D^i$ and $Q= \sum_{j=0}^{m} q_j D^j$ be two commuting elements of $T$ with constant leading coefficients. Then there is a non-zero polynomial $f(s,t)$ in two commuting variables over $\C$ such that 
$f(P,Q) =0$. Note that the fact that $P$ and $Q$ commute 
guarantees that $f(P,Q)$ is well-defined.
\end{theorem}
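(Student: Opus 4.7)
My plan is to exploit the fact that, since $p_n$ and $q_m$ lie in $\C$, leading coefficients multiply cleanly: for all $i, j \ge 0$, the product $P^i Q^j$ has order exactly $ni + mj$ with leading coefficient $p_n^i q_m^j \in \C$. This multiplicativity, together with the commutativity of $P$ and $Q$, is the engine of the proof.

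The route I would take is to first show that the centralizer $\Cen_T(P)$ of $P$ is a free $\C[P]$-module of rank at most $n$. For this, for each residue class $r \in \{0, 1, \ldots, n-1\}$, I would choose (if one exists) a centralizing element $R_r$ of minimal order congruent to $r$ modulo $n$. Given any $R \in \Cen_T(P)$ of order $k$ with $k \equiv r \imod{n}$, I would write $k - \mathrm{ord}(R_r) = nq$ and subtract a unique $\C$-scalar multiple of $P^q R_r$ to strictly reduce the order; this cancellation is well-defined precisely because the leading coefficient of $P^q R_r$ is $p_n^q$ times that of $R_r$, a nonzero constant. Iterating, every element of $\Cen_T(P)$ becomes a $\C[P]$-linear combination of $R_0, \ldots, R_{n-1}$. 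Since $Q \in \Cen_T(P)$, the infinite sequence $1, Q, Q^2, \ldots$ lies in this free module of rank at most $n$, and a Cayley--Hamilton argument applied to multiplication-by-$Q$ produces a monic polynomial equation for $Q$ with coefficients in $\C[P]$; expanded, this is exactly the desired relation $f(P, Q) = 0$ with $f \in \C[s,t]$ nonzero.

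The main obstacle I expect is making the reduction procedure rigorous: one must verify that the sequence of orders produced by repeated subtraction terminates, and that commutativity genuinely ensures each intermediate remainder still centralizes $P$. The constant-leading-coefficient hypothesis is essential, since without $p_n \in \C^\times$ one cannot invert $p_n^q$ to cancel the leading term; this is presumably also why the assumption appears in the theorem and why the paper's later valuation/pseudo-degree framework is designed to formalize exactly this kind of leading-term cancellation. Once freeness is established, the nonvanishing of $f$ follows because the characteristic polynomial of an endomorphism on a nonzero finite free module is monic, hence nonzero.
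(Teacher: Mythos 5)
Your outline is the Amitsur-style argument that this paper axiomatizes (pick centralizing elements of minimal order in each residue class modulo $n=\mathrm{ord}(P)$, reduce by leading-term cancellation to get a finitely generated $\C[P]$-module, then deduce an algebraic relation for $Q$), so in spirit it matches how the paper recovers Theorem~\ref{thm_BC} from its general results (Theorem~\ref{thm_BoundDim} and Corollary~\ref{corAlgDep}). However, there is a genuine gap at the crucial cancellation step. You subtract a \emph{$\C$-scalar} multiple of $P^qR_r$ from $R$ and claim the top order drops, justifying this by the fact that the leading coefficient of $P^qR_r$ is $p_n^q$ times that of $R_r$. That only helps if the leading coefficient of $R$ is a $\C$-multiple of the leading coefficient of $R_r$; a priori both are arbitrary smooth functions, and no scalar $\alpha$ will cancel, say, a nonconstant leading coefficient against a constant one. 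What is missing is exactly the point where commutation with $P$ is used beyond module closure: if $R=r_kD^k+\cdots$ commutes with $P=p_nD^n+\cdots$ with $p_n\in\C^\times$ and $n>0$, then comparing the coefficients of $D^{n+k-1}$ in $PR=RP$ gives $n\,p_n\,r_k'-k\,r_k\,p_n'=n\,p_n\,r_k'=0$, hence $r_k$ is constant. This lemma (the analogue of equation~\eqref{eqHigCoe} and of condition $D(1)$ / $1$-BDHC in the paper's framework) is what makes the space of possible leading coefficients in each order one-dimensional over $\C$, so that your reduction, the strict decrease of order, and the linear independence of $R_0,\ldots,R_{n-1}$ all go through. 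Without it the induction can stall, and it is also the step where the hypothesis of constant leading coefficient genuinely enters -- not merely, as you suggest, to make $p_n^q$ invertible.

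Two smaller points. First, the coefficient ring $C^{\infty}(\R,\C)$ has zero divisors, so the order of a product is not automatically the sum of the orders; in your argument every product whose order you use has one factor with nonzero constant leading coefficient (e.g.\ $P^q$, or $\phi(P)$ with $\phi\in\C[P]$), which saves the degree computations, but this should be said explicitly since it is what lets the pseudo-degree bookkeeping of the paper apply here. Second, the theorem as stated allows $n=0$ (then $P=p_0\in\C$ and $f(s,t)=s-p_0$ works) and similarly $m=0$; your centralizer argument needs $n>0$, so the degenerate case should be disposed of separately. With the constant-leading-coefficient lemma added, your Cayley--Hamilton (or simply ``any $n+1$ powers of $Q$ are $\C[P]$-dependent'') conclusion is fine.
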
 

The result of Burchnall and Chaundy was rediscovered independently during the 70s by researchers in the area of PDEs. It turns out that several important equations can be equivalently 
formulated as a condition that a pair of differential operators commute. These differential equations are completely integrable as a result, which roughly means that they possess an infinite number of conservation
laws. In fact  Theorem~\ref{thm_BC} was rediscovered by Kricherver \cite{Krichever} as part of his research into integrable systems. 

To state some generalizations of Burchnall's and Chaundy's result  we shall recall a definition. 

\begin{definition}

Let $R$ be a ring, $\sigma$ an endomorphism of $R$ and $\delta$ an additive function, $R \to R$, satisfying 
\begin{equation*}
\delta(ab) = \sigma(a)\delta(b)+\delta(a)b
\end{equation*}
for all $a,b \in R$. (Such $\delta$:s are known as $\sigma$-derivations.) The \emph{Ore extension} $\OreGen$ is the polynomial ring $R[x]$ equipped with a new multiplication such that $xr=\sigma(r)x+\delta(r)$ for all $r \in R$. Every element of $\OreGen$ can be written uniquely as  $\sum_i a_i x^i$ for some $a_i \in R$.

If $\sigma=\identity$ then $\DiffPol$ is called a \emph{differential operator ring}. If $P= \sum_{i=0}^n a_i x^i$, with $a_n \neq 0$, we say that $P$ has \emph{degree} $n$. We say that the zero element has degree $-\infty$. 
\end{definition}

The ring of differential operators studied by Burchnall and Chaundy can be taken to be the Ore extension $T=C^{\infty}(\R,\C)[D;\identity,\delta]$, where $\delta$ is the ordinary derivation.

In a paper by Amitsur \cite{Amitsur} one can find the following theorem.

\begin{theorem}\label{thm_DiffField}
 Let $K$ be a field of characteristic zero with a derivation $\delta$. Let $F$ denote the subfield of constants. (By a \emph{constant} we mean an element that is mapped to zero by the derivation.) Form the differential operator ring $S=K[x; \identity,\delta]$, and let $P$ be an element of $S$ of 
degree $n>0$. Set $F[P]= \{ \sum_{j=0}^m b_j P^j \ | \ b_j \in F \ \}$, the ring of polynomials in $P$ with constant coefficients. 
Then the centralizer of $P$ is a commutative subring of
$S$ and a free $F[P]$-module of rank at most $n$.  
\end{theorem}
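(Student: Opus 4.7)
The plan is to organize $C(P)$ by residue class of the degree modulo $n$ and to extract everything from a single computation involving the two top coefficients of $P$ and of an element commuting with it. For each $i \in \{0, 1, \ldots, n-1\}$, I consider $C_i = \{Q \in C(P) : \deg Q \equiv i \imod{n}\} \cup \{0\}$; the goal is to show that each non-trivial $C_i$ is a cyclic $F[P]$-module generated by a minimal-degree element, and that these generators together form a free basis of $C(P)$.

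First I would derive the key technical ingredient. Writing $P = \sum_{k=0}^{n} a_k x^k$ with $a_n \neq 0$ and $Q = \sum_{j=0}^{m} b_j x^j$ with $b_m \neq 0$, and expanding $PQ - QP = 0$ using $x^k r = \sum_i \binom{k}{i} \delta^i(r) x^{k-i}$, the coefficient of $x^{n+m-1}$ will yield
\[
n a_n \delta(b_m) = m b_m \delta(a_n),
\]
the top coefficients cancelling automatically since $K$ is commutative. Dividing by $n$, permissible because $\CHAR K = 0$, this rearranges to $\delta(b_m)/b_m = (m/n)\delta(a_n)/a_n$. The corollary I will use is: if $Q, Q' \in C(P)$ have the same degree $m$ with leading coefficients $b, b'$, then $\delta(b/b') = 0$, so $b/b' \in F$.

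Next I would establish the free module structure by a descent on degree. If $C_i \neq \{0\}$, pick $Q_i$ of minimal degree $n q_i + i$ in $C_i$ and let $I \subseteq \{0, 1, \ldots, n-1\}$ index the non-trivial classes. For any $Q \in C_i$ of degree $nk + i$, the operator $P^{k - q_i} Q_i$ lies in $C_i$ and has the same degree as $Q$, so the corollary supplies a unique $c \in F$ with $Q - c P^{k - q_i} Q_i$ of strictly smaller degree; iterating produces $Q = f_i Q_i$ for a unique $f_i \in F[P]$. $F[P]$-linear independence of the $Q_i$'s is automatic: the degrees of non-zero summands $f_j Q_j$ lie in distinct residue classes modulo $n$, so their leading terms cannot cancel. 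Hence $C(P) = \bigoplus_{i \in I} F[P]\, Q_i$ is free of rank $|I| \leq n$.

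The hardest part will be commutativity. My plan is to enlarge $S$ to the ring $\tilde{S} = K((x^{-1}; \identity, \delta))$ of formal pseudo-differential operators; after possibly adjoining an $n$-th root of $a_n$ to $K$ and extending $\delta$, the operator $P$ admits inside $\tilde{S}$ an $n$-th root $L$ of degree one, constructed recursively from $L^n = P$ (at each step a linear equation with coefficient a nonzero multiple of $n$, invertible thanks to $\CHAR K = 0$). The centralizer of a degree-one element in $\tilde{S}$ is a commutative formal Laurent series ring in $L^{-1}$ over the constant subfield, and uniqueness of the $n$-th root gives $C_{\tilde{S}}(P) \subseteq C_{\tilde{S}}(L)$, so $C(P) \subseteq C_{\tilde{S}}(L)$ is commutative. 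The principal obstacles are making the recursive construction of $L$ rigorous and proving the inclusion $C_{\tilde{S}}(P) \subseteq C_{\tilde{S}}(L)$; both are points where the characteristic-zero hypothesis is essential.
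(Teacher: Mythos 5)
Your first two steps are essentially the paper's own route to this statement (which it quotes from Amitsur but recovers through its general machinery): the $x$-degree is a pseudo-degree function, your identity $n a_n\delta(b_m)=m b_m\delta(a_n)$ is exactly the verification that $C_S(P)$ satisfies condition $D(1)$ over $F$, and the minimal-degree-representatives-per-residue-class descent is precisely the proof of Theorem \ref{thm_BoundDim}, giving the free $F[P]$-module structure of rank at most $n$. Where you genuinely diverge is commutativity. The paper (Theorem \ref{thmComCen}) stays inside $S$: the degrees occurring in the centralizer form, modulo $n$, a subgroup of $\Z_n$; a generator yields a commutative subalgebra $E=F[P,Q_i]$ of finite codimension in $C_S(P)$, some nonzero $\phi\in F[P]$ multiplies $C_S(P)$ into $E$, and then $\phi^2c_1c_2=\phi^2c_2c_1$ plus the domain property finishes. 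You instead run the classical Schur-type argument: embed in the formal pseudodifferential ring, adjoin $a_n^{1/n}$ (fine in characteristic zero, where $\delta$ extends uniquely to the separable extension), build $L=P^{1/n}$ recursively, prove $C(P)\subseteq C(L)$ via $0=[L^n,Q]=\sum_k L^k[L,Q]L^{n-1-k}$ and $\CHAR K=0$, and use that $C(L)$ is a Laurent series ring in $L^{-1}$ over the constants. That route is correct, but it buys commutativity at the cost of constructing and justifying the larger ring $K((x^{-1};\identity,\delta))$, while the paper's argument uses only the degree function and the absence of zero divisors, which is exactly why it generalizes to the valuation/$D(\ell)$ setting. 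One small inaccuracy to fix in your write-up: $C_i$ is not an $F[P]$-submodule and need not equal $F[P]Q_i$ (the remainder $Q-cP^{k-q_i}Q_i$ can fall into a different residue class, and sums within $C_i$ can drop out of it), so the descent should be phrased as an induction on degree over all classes simultaneously; the conclusion that the $Q_i$ form a free basis, via your distinct-residues leading-term argument, is unaffected.
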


Later authors have found other contexts where Amitsur's method of proof can be made to work. We mention an article by Goodearl and Carlson \cite{GoodearlCarlson}, and one by Goodearl alone \cite{GoodearlPseudo}, that generalize Amitsur's result to a wider class of rings. The proof has also been generalized by Bavula \cite{Bavula}, Mazorchouk \cite{Mazorchuk} and Tang \cite{Xin}, among other authors. As a corollary of these results, one can recover Theorem \ref{thm_BC}.

This paper is most directly inspired by a paper by Hellstr\"{o}m and Silvestrov \cite{ergodipotent}, however. Hellstr\"{o}m and Silvestrov study graded algebras satisfying a condition they call $l$-BDHC (short for ``Bounded-Dimension Homogeneous Centralizers''). 

\begin{definition}
Let $K$ be a field, $\ell$ a positive integer and $S$ a $\Z$-graded $K$-algebra. The homogeneous components of the gradation are denoted $S_m$, for $m \in \Z$. Let $\Cen(n,a)$, for $n \in \Z$ and $a \in S$, denote the elements in $S_n$ that commute with $a$. We say that $S$ has $\ell$-BDHC if for all $n \in \Z$, nonzero $m \in \Z$ and nonzero $a \in S_m$, it holds that $\dim_K \Cen(n,a) \leq \ell$.  

\end{definition}

Hellstr\"{o}m and Silvestrov apply the ideas of Amitsur's proof. They need to modify them however, especially to handle the case when $\ell>1$. 

To explain their results further, we introduce some more of their notation. Denote by $\pi_n$ the projection, defined in the obvious way, from $S$ to $S_n$. Hellstr\"{o}m and Silvestrov define a function $\bar{\chi}: A\setminus \{ 0\} \to \Z$ by
\begin{equation*}
\bar{\chi}(a) = \max\{ \, n \, \in \Z \, | \, \pi_n(a) \neq 0 \, \},
\end{equation*}
and set $\bar{\chi}(0) = -\infty$. %(They also define an analogue with $\min$ instead of $\max$.) 
Set further $\bar{\pi}(a) = \pi_{\bar{\chi}(a)}(a)$. 

Now we have introduced enough notation to state the relevant results. The following result is the main part of Lemma 2.4 in their paper. 

\begin{theorem}\label{thm_HS1}
Assume $S$ is a $K$-algebra with $l$-BDHC and that there are no zero divisors in $S$. If $a \in S\setminus S_0$ is such that $\bar{\chi}(a)=m >0$ and $\bar{\pi}(a)$ is not invertible in $S$, then there exists a finite $K[a]$-module basis $\{b_1,\ldots, b_k\}$ for the centralizer of $a$. Furthermore $k \leq ml$. 
\end{theorem}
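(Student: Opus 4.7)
The plan is to carry out an Amitsur-style leading-term analysis, adapted to the graded setting as Hellstr\"om and Silvestrov do in \cite{ergodipotent}. Since $S$ is a domain, $\bar{\chi}(bc) = \bar{\chi}(b) + \bar{\chi}(c)$ and $\bar{\pi}(bc) = \bar{\pi}(b)\bar{\pi}(c)$ for nonzero $b,c$; applied to the relation $ab = ba$ for $b \in C(a) \setminus \{0\}$, this yields $\bar{\pi}(a)\bar{\pi}(b) = \bar{\pi}(b)\bar{\pi}(a)$, so $\bar{\pi}(b) \in \Cen(\bar{\chi}(b), \bar{\pi}(a))$. For each $n \in \Z$ I set
\[
V_n := \{\, \bar{\pi}(b) : b \in C(a),\ \bar{\chi}(b) = n\,\} \cup \{0\};
\]
a direct check shows $V_n$ is a $K$-subspace of $\Cen(n, \bar{\pi}(a))$, so $\dim_K V_n \leq \ell$ by the $\ell$-BDHC hypothesis. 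Multiplication by $\bar{\pi}(a)$ is injective (no zero divisors) and carries $V_n$ into $V_{n+m}$, so along each residue class $r \in \{0,\ldots,m-1\}$ modulo $m$, the sequence $(\dim V_{r+jm})_{j \in \Z}$ is nondecreasing in $j$ and bounded by $\ell$, stabilizing at some value $d_r \leq \ell$ as $j \to +\infty$. The rank of the intended basis will be $k = \sum_{r=0}^{m-1} d_r \leq m\ell$.

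The main obstacle I foresee is showing that, in each class $r$, one also has $V_n = 0$ for $n$ sufficiently negative; this is where the hypothesis that $\bar{\pi}(a)$ is not invertible must enter, since otherwise (e.g.\ in Laurent-polynomial-type examples) the centralizer fails to be finitely generated over $K[a]$. My plan for this step is the following. First note that the centralizer $Z$ of $\bar{\pi}(a)$ in $S$ is a graded $K$-subalgebra which is a domain and satisfies $\dim_K Z_n \leq \ell$, so its degree-zero component $Z_0$ is a finite-dimensional $K$-algebra that is also a domain, hence a division ring. If $\dim V_n$ were stably positive as $n \to -\infty$ along some class $r$, then in that regime $\bar{\pi}(a) \colon V_{n-m} \to V_n$ would be bijective, and any nonzero $v \in V_n$ would admit factorizations $v = \bar{\pi}(a)^k w_k$ with $w_k \in Z_{n-km}$ for all $k \geq 0$. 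For appropriate positive integers $p,q$ (taking $q = m/\gcd(r,m)$) the element $e := \bar{\pi}(a)^p v^q$ lies in $Z_0$, is nonzero, and inherits infinite divisibility by $\bar{\pi}(a)$; inverting $e$ in the division ring $Z_0$ then exhibits a right inverse of $\bar{\pi}(a)$ in $S$, contradicting the non-invertibility hypothesis.

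Granted this, the basis construction is routine. In each class $r$, let $n_{r,1} < \cdots < n_{r,d_r}$ enumerate the values $n \equiv r \pmod m$ at which $\dim V_n > \dim V_{n-m}$, and pick $b_{r,j} \in C(a)$ with $\bar{\chi}(b_{r,j}) = n_{r,j}$ so that $\bar{\pi}(a) V_{n_{r,j}-m}$ together with $\bar{\pi}(b_{r,j})$ spans $V_{n_{r,j}}$. To see that these $b_{r,j}$ generate $C(a)$ as a $K[a]$-module, I argue by induction on $\bar{\chi}$: given nonzero $c \in C(a)$, the leading part $\bar{\pi}(c) \in V_{\bar{\chi}(c)}$ can be written as a $K$-linear combination of expressions $\bar{\pi}(a^s b_{r,j})$ with $sm + n_{r,j} = \bar{\chi}(c)$, and subtracting the corresponding $K[a]$-combination from $c$ strictly lowers $\bar{\chi}(c)$; the induction terminates because $\bar{\chi}$ is bounded below on $C(a) \setminus \{0\}$ by the previous step. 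Finally, $K[a]$-linear independence of the $b_{r,j}$ is verified by taking the leading term of any hypothetical vanishing relation and exploiting that the $\bar{\pi}(b_{r,j})$ were chosen independent modulo $\bar{\pi}(a) V_{n_{r,j}-m}$. The total count is $\sum_r d_r \leq m\ell$, as required.
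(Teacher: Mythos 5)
Your proof is essentially correct, but it takes a genuinely different route from the paper. The paper does not prove Theorem~\ref{thm_HS1} directly: it obtains it as a corollary of the abstract Theorem~\ref{thm_BoundDim}, whose proof picks a greedy sequence $b_1=1,b_2,\ldots$ of minimal $\chi$-value outside the $K[a]$-span of its predecessors, establishes the degree-additivity formula $\chi(\sum\phi_ib_i)=\max_i(\chi(\phi_i)+\chi(b_i))$ by induction, and then counts at most $\ell$ basis elements per residue class mod $m$; the graded hypotheses (in particular that $\bar{\chi}\geq 0$ on the centralizer, which is where non-invertibility of $\bar{\pi}(a)$ enters) are not verified in the paper but cited from Lemmas 2.2 and 2.4 of \cite{ergodipotent}. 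You instead argue directly on the graded structure: the leading-term spaces $V_n\subseteq\Cen(n,\bar{\pi}(a))$, injectivity of multiplication by $\bar{\pi}(a)$, and dimension monotonicity along residue classes, together with a self-contained proof of the lower-boundedness step that the paper outsources (via the finite-dimensional domain $Z_0$, which is a division algebra, and infinite divisibility of a degree-zero element by $\bar{\pi}(a)$ forcing invertibility). What each approach buys: the paper's axiomatization (pseudo-degree functions and condition $D(\ell)$) applies beyond graded algebras, e.g.\ to the Ore extension example of Section 3, whereas your argument is tied to the grading but is self-contained for this theorem and yields the finer information that the rank equals $\sum_r d_r$ with $d_r$ the stable leading-space dimension in class $r$. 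Two small repairs are needed but are routine: a dimension jump $\dim V_n-\dim V_{n-m}$ may exceed one, so you must allow several $b$'s at the same degree (the total count $\sum_r d_r\leq m\ell$ is unaffected); and your construction produces only a right inverse of $\bar{\pi}(a)$, which must be upgraded to a two-sided inverse by the one-line observation that in a domain $xy=1$ implies $(yx-1)y=0$, hence $yx=1$.
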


The reason they refer to it as a lemma is that their main interest is in the following corollary of this result. (Which is proved the same way as Corollary \ref{corAlgDep} in this paper.) 

\begin{theorem}\label{thm_HS2}
Let $K$ be a field and assume the $K$-algebra $S$ has $l$-BDHC and that there are no zero divisors in $S$. If $a \in S\setminus S_0$ and $b\in S$ are such that $ab = ba$, $\bar{\chi}(a)>0$ and $\bar{\pi}(a)$ is not invertible in $S$, then there exists a nonzero polynomial $P$ in two commuting variables with coefficients from $K$ such that $P(a,b)=0$. 
\end{theorem}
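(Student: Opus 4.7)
The plan is to deduce Theorem~\ref{thm_HS2} from Theorem~\ref{thm_HS1} by a rank-counting argument, in the spirit of Amitsur's classical deduction of algebraic dependence from finite-rank centralizers.

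First I would observe that since $ab=ba$, every power $b^i$ also commutes with $a$, so $1,b,b^2,\ldots$ all lie in the centralizer $C(a)$ of $a$. The hypotheses on $a$ (namely $a \notin S_0$, $\bar\chi(a)=m>0$, $\bar\pi(a)$ not invertible, and $S$ having no zero divisors with $\ell$-BDHC) are exactly those required to invoke Theorem~\ref{thm_HS1}. Applying it, $C(a)$ is a free $K[a]$-module with some basis $\{b_1,\ldots,b_k\}$ where $k\le m\ell$.

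Next I would split into two cases on whether $a$ is algebraic or transcendental over $K$. If $a$ is algebraic, then there is a nonzero $q(s)\in K[s]$ with $q(a)=0$, and $P(s,t):=q(s)$ immediately gives the conclusion. Otherwise $a$ is transcendental, so $K[a]\cong K[s]$ is a polynomial ring in one variable, in particular a commutative integral domain. The $k+1$ elements $1,b,b^2,\ldots,b^k$ of the free rank-$k$ module $C(a)$ must then be $K[a]$-linearly dependent (this is a standard fact: tensor with the field of fractions of $K[a]$, where $k+1$ vectors in a $k$-dimensional space are linearly dependent, then clear denominators). Thus there exist polynomials $p_0(s),\ldots,p_k(s)\in K[s]$, not all zero, with $\sum_{i=0}^k p_i(a)\,b^i = 0$. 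Setting $P(s,t):=\sum_{i=0}^k p_i(s)\,t^i \in K[s,t]$ yields a nonzero polynomial with $P(a,b)=0$; nonzeroness is precisely where transcendence of $a$ is needed, since otherwise $p_i(s)\ne 0$ could still produce $p_i(a)=0$.

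There is essentially no hard step here once Theorem~\ref{thm_HS1} is in hand; the only genuine subtlety is ensuring that $P$ is nonzero \emph{as a polynomial in two commuting indeterminates}, which is the point of separating out the algebraic case. The fact that $P(a,b)$ is even well-defined as an expression in $S$ rests on $a$ and $b$ commuting, as already noted in the statement of the theorem.
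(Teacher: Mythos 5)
Your argument is correct and is essentially the paper's own route: the paper proves Theorem~\ref{thm_HS2} exactly as Corollary~\ref{corAlgDep}, namely by invoking the finite-rank statement (Theorem~\ref{thm_HS1}) to conclude that the powers $1,b,b^2,\ldots$ cannot all be $K[a]$-linearly independent, and then converting the resulting relation $\sum_i p_i(a)b^i=0$ into the annihilating polynomial $P(s,t)=\sum_i p_i(s)t^i$. Your extra case split on $a$ being algebraic over $K$ is harmless but vacuous here, since $\bar{\chi}(a^i)=i\,\bar{\chi}(a)$ are distinct positive integers (no zero divisors), so $a$ is automatically transcendental and the nonzeroness of $P$ follows as you say.
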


Theorem \ref{thm_HS2} is directly analogous to Theorem \ref{thm_BC}.

Hellstr\"om and Silvestrov also have a result asserting that certain centralizers are commutative. Their proof can be made to work in the case when $A$ has $1$-BDHC. 

\begin{theorem}\label{thm_HS3}
Assume the $K$-algebra $S$ has $1$-BDHC and that there are no zero divisors in $S$. If $a \in S\setminus S_0$ satisfies $\bar{\chi}(a)=m >0$ and $\bar{\pi}(a)$ is not invertible in $S$, then there exists a finite $K[a]$-module basis $\{b_1,\ldots, b_k\}$ for the centralizer of $a$. The cardinality, $k$, of the basis divides $m$. Furthermore the centralizer of $a$ is commutative. 
\end{theorem}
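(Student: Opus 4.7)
Write $C(a)$ for the centralizer of $a$. The plan is to apply Theorem \ref{thm_HS1} with $\ell=1$ to get a finite $K[a]$-basis of $C(a)$, and then to establish both $k \mid m$ and commutativity by exploiting the $\ell = 1$ structure of that basis. A key first observation is that the basis from Theorem \ref{thm_HS1} is naturally indexed by the set
\[
R \;=\; \{\, \bar{\chi}(c) \bmod m \, : \, c \in C(a)\setminus\{0\} \,\} \;\subseteq\; \Z/m\Z,
\]
with exactly one element $b_r$ of minimal $\bar{\chi}$ per residue class $r \in R$, so that $k = |R|$. This relies on $\ell = 1$: since $\dim_K \Cen(n,\bar{\pi}(a)) \leq 1$, the top-degree part of any $c \in C(a)$ with $\bar{\chi}$-residue $r$ is a scalar multiple of the corresponding $\bar{\pi}(a)^j \bar{\pi}(b_r)$, allowing iterative reduction.

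For the divisibility, I would show that $R$ is a subgroup of $\Z/m\Z$. It contains $0$ (because $1 \in C(a)$) and is closed under addition, since for $c, c' \in C(a)\setminus\{0\}$ the product $cc'$ is a nonzero element of $C(a)$ (no zero divisors) with $\bar{\chi}(cc') = \bar{\chi}(c) + \bar{\chi}(c')$. Any submonoid of a finite group is a subgroup, so $k = |R|$ divides $m$.

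For commutativity, the strategy is to embed $C(a)$ into an explicitly commutative ring, sidestepping any direct cocycle computation on basis products. Since $R$ is a cyclic subgroup of $\Z/m\Z$, pick a generator $g$ of $R$ and the corresponding basis element $b_g \in C(a)$. The powers $1, b_g, \ldots, b_g^{k-1}$ have pairwise distinct residues $0, g, 2g, \ldots, (k-1)g$ modulo $m$ and are therefore $K[a]$-linearly independent by the standard distinct-residue argument. Moreover $a$ is transcendental over $K$ (the values $\bar{\chi}(a^n) = nm$ are all distinct), so $K[a]$ is a polynomial subring, central in $C(a)$ and consisting of non-zero-divisors; hence we may form the localization $\tilde{B} = K(a) \otimes_{K[a]} C(a)$, a $K(a)$-algebra of dimension $k$ into which $C(a)$ injects (as $C(a)$ is free over $K[a]$).

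Inside $\tilde{B}$ the subalgebra $K(a)[b_g]$ contains the $K(a)$-linearly independent set $\{1, b_g, \ldots, b_g^{k-1}\}$, hence has $K(a)$-dimension at least $k$, and a dimension count forces $K(a)[b_g] = \tilde{B}$. But $K(a)[b_g]$ is commutative, being generated over the field $K(a)$ by a single element commuting with $K(a)$; hence $\tilde{B}$ is commutative, and so is its subring $C(a)$. The step I expect to require the most care is the first one---unpacking the $\ell = 1$ case of Theorem \ref{thm_HS1} to see that the basis really has size exactly $|R|$ rather than just at most $|R|$---since once this residue-class picture of the basis is pinned down, the divisibility and the localization argument for commutativity both follow cleanly.
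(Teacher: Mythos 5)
Your argument is correct, but it reaches the conclusion by a genuinely different route than the paper. The paper deduces this statement from its general Theorem~\ref{thmComCen} (via the pseudo-degree function $\bar{\chi}$ and condition $D(1)$, supplied by Hellstr\"{o}m--Silvestrov's Lemmas 2.2 and 2.4): there the divisibility comes from showing that the set $G$ of residues of the \emph{basis} degrees is closed under addition by expanding $b_ib_j$ in the basis and using the degree formula \eqref{degreeSum}, and commutativity is obtained without localization -- one takes the commutative subalgebra $E=K\langle a,b_g\rangle$, shows $C_S(a)/E$ is finite-dimensional over $K$, finds a nonzero $\phi\in K[a]$ with $\phi\, C_S(a)\subseteq E$, and cancels in the domain via $\phi^2c_1c_2=\phi^2c_2c_1$. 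You instead take $R$ to be the set of residues of \emph{all} nonzero centralizer elements (a submonoid, hence subgroup, of $\Z/m\Z$ directly from multiplicativity of $\bar{\chi}$ and the absence of zero divisors), identify $k=|R|$ via the one-basis-element-per-residue-class structure forced by $1$-BDHC, and then prove commutativity by passing to the central localization $K(a)\otimes_{K[a]}C_S(a)$, where a dimension count over the field $K(a)$ forces this $k$-dimensional algebra to equal the commutative subalgebra $K(a)[b_g]$. Your localization argument is arguably cleaner and more conceptual than the paper's $\phi^2$-cancellation trick, at the cost of invoking central localization and the invariance of rank; the paper's argument stays entirely inside $S$ and generalizes verbatim to the valuation setting it axiomatizes. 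One point to make explicit in your first step: choosing a minimal-$\bar{\chi}$ representative $b_r$ in each residue class, and terminating the iterative reduction, both require that $\bar{\chi}$ is bounded below (in fact nonnegative) on $C_S(a)\setminus\{0\}$; this is exactly where the hypotheses that $\bar{\pi}(a)$ is not invertible and that $S$ has no zero divisors enter (Hellstr\"{o}m--Silvestrov's Lemma 2.2, encoded in the paper as the requirement $\chi(b)\geq 0$ in condition $D(\ell)$), and your write-up never uses non-invertibility explicitly, so that justification should be added.
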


It shall be the goal of this paper to generalize the results we have cited from \cite{ergodipotent}. 

\subsection{Notation and conventions}

$\Z$ will denote the integers. 

If $R$ is a ring then $R[x_1,x_2, \ldots x_n]$ denotes the ring of polynomials over $R$ in central indeterminates $x_1,x_2,\ldots,x_n$. 

All rings and algebras are assumed to be associative and unital.

Let $R$ be a commutative ring and $S$ an $R$-algebra. Two commuting elements, $p,q \in S$, are said to be \emph{algebraically dependent} (over $R$) if there is a non-zero polynomial, $f(s,t) \in R[s,t]$, such that $f(p,q)=0$, in which 
case $f$ is called an annihilating polynomial. 

If $S$ is a ring and $a$ is an element in $S$, the \emph{centralizer} of $a$, denoted $C_S(a)$, is the set of all elements in $S$ that commute with $a$. 

By $K$ we will always denote a field.

\section{Centralizers in algebras with degree functions}

Upon reading the proofs in \cite{ergodipotent} closely it turns out that they are based upon certain properties of the function $\bar{\chi}$ they define.   We shall axiomatize the properties that are needed to make their proof work. 

\begin{definition}
Let $K$ be a field and let $S$ be a $K$-algebra. A function, $\chi$, from $S$ to $\Z \cup \{-\infty\}$ is called a pseudo-degree function if it satisfies the following conditions: 

\begin{itemize}
\item $\chi(a) = -\infty$ iff $a=0$,
\item $\chi(ab) = \chi(a)+\chi(b)$ for all $a,b \in S$, 
\item $\chi(a+b) \leq \max(\chi(a),\chi(b))$,
%\item $\chi(a+b) = \chi(a)$ if $\chi(b)<\chi(a)$. 
\end{itemize}

\end{definition}

This is essentially a special case of the concept of a \emph{valuation}.

We also need a condition that can replace $l$-BDHC. We formulate it next.

\begin{definition}

Let $K$ be a field and $S$ a $K$-algebra with a pseudo-degree function, $\chi$, and let $\ell$ be a positive integer. A subalgebra ,$B \subset A$, is said to satisfy condition $D(\ell)$ if  $\chi(b) \geq 0$ for all non-zero $b \in B$ and if, whenever we have $\ell+1$ elements $b_1, \ldots, b_{l+1} \in B$, all mapped to the same integer by $\chi$, there exist $\alpha_1,\ldots, \alpha_{\ell+1} \in K$, not all zero, such that $\chi\left(\sum_{i=1}^{l+1} \alpha_i b_i\right) < \chi(b_1)$. 

\end{definition}

\begin{remark}
Note that the requirement that $\alpha_1,\ldots,\alpha_{l+1}$ are mapped to the same integer by $\chi$ excludes the possibility that they are equal to $0$.   

\end{remark}

\begin{remark}\label{remScalarDeg}

Suppose that $S$ is a $K$-algebra and $a \in S$ is such that $C_S(a)$ satisfies condition $D(\ell)$ for some $\ell$. If $b$ is an invertible element then $\chi(b^{-1}) = -\chi(b)$. So all invertible elements of $C_S(a)$ must be mapped to zero by $\chi$. In particular the non-zero scalars are all mapped to zero by $\chi$. 

\end{remark}

\begin{lemma}\label{lemAdd}

Suppose that $S$ is an $K$-algebra and $\chi$ is a pseudo-degree function on $S$ that maps all the non-zero scalars to zero. Then if $a,b \in S$  are such that $\chi(b)<\chi(a)$, the identity 
\begin{equation}
\chi(a+b) =\chi(a)
\end{equation}
holds.
\end{lemma}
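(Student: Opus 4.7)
The plan is the standard non-Archimedean valuation argument: one inequality is immediate from the third axiom, and the other is obtained by symmetry after writing $a$ as a difference.

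First, I would observe directly from the subadditivity axiom that $\chi(a+b) \leq \max(\chi(a), \chi(b)) = \chi(a)$, using the assumption $\chi(b) < \chi(a)$. This gives one direction.

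For the reverse inequality, I would use the trick $a = (a+b) + (-b)$ together with the same subadditivity axiom, obtaining
\begin{equation*}
\chi(a) \leq \max\bigl(\chi(a+b),\, \chi(-b)\bigr).
\end{equation*}
The key subsidiary fact needed here is that $\chi(-b) = \chi(b)$. This follows from multiplicativity, since $-b = (-1)\cdot b$ gives $\chi(-b) = \chi(-1) + \chi(b)$, and the hypothesis that every non-zero scalar is mapped to zero yields $\chi(-1) = 0$ (noting that $-1 \neq 0$ in any field).

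Since $\chi(-b) = \chi(b) < \chi(a)$, the maximum on the right cannot be $\chi(-b)$, so it must be $\chi(a+b)$, giving $\chi(a) \leq \chi(a+b)$. Combined with the first step, this yields the claimed equality. There is no real obstacle here; the only thing to be a little careful about is invoking the hypothesis on scalars to handle the sign, rather than assuming $\chi$ is automatically sign-invariant.
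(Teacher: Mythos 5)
Your proof is correct and follows essentially the same route as the paper: one inequality from subadditivity, the other from writing $a=(a+b)-b$ and using that $\chi(b)<\chi(a)$ forces the maximum to be $\chi(a+b)$. Your explicit justification that $\chi(-b)=\chi(b)$ via $\chi(-1)=0$ is exactly the point where the scalar hypothesis enters, which the paper's proof uses implicitly.
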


\begin{proof}
On the one hand we find $\chi(a+b) \leq \max(\chi(a),\chi(b))=\chi(a)$. On the other hand 
$\chi(a) = \chi(a+b-b) \leq \max(\chi(a+b),\chi(b))$ Since $\chi(b)<\chi(a)$ we must have 
$\chi(a) \leq \chi(a+b)$. 
\end{proof}

We now proceed to prove an analogue of Theorem \ref{thm_HS1}, using just the existence of some pseudo-degree function and the condition $D(\ell)$.

\begin{theorem}\label{thm_BoundDim}
Let $K$ be a field and let $S$ be a $K$-algebra. Suppose $S$ has a pseudo-degree function, $\chi$.

Let $a$ be an element of $S$, with $m=\chi(a)>0$, such that $C_S(a)$ satisfies condition $D(\ell)$ for some positive integer $\ell$. Then $C_S(a)$ is a free $K[a]$-module of rank at most $\ell m$.

\end{theorem}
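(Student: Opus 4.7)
The plan is to imitate the Amitsur/Hellstr\"{o}m--Silvestrov filtration argument, now driven only by the axioms for $\chi$ and condition $D(\ell)$. First, multiplicativity of $\chi$ forces $S$ to have no zero divisors, and by Remark \ref{remScalarDeg} applied to $C_S(a)$ all nonzero scalars lie in $\chi^{-1}(0)$. A short computation with Lemma \ref{lemAdd} then shows that $\chi(p(a)) = m \deg p$ for every nonzero $p \in K[x]$, so $K[a]$ is a polynomial ring in $a$.

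Write $M := C_S(a)$ and filter it by $F_n := \{b \in M : \chi(b) \leq n\}$ for $n \geq 0$; Lemma \ref{lemAdd} makes each $F_n$ a $K$-subspace. Condition $D(\ell)$ is precisely the statement that $\dim_K F_n / F_{n-1} \leq \ell$: any $\ell + 1$ classes independent modulo $F_{n-1}$ may, after subtracting lower-degree representatives, be assumed to have degree exactly $n$, whereupon $D(\ell)$ produces a nontrivial $K$-combination of strictly smaller degree. Multiplication by $a$ gives $K$-linear maps $\phi_n : F_n/F_{n-1} \to F_{n+m}/F_{n+m-1}$, injective by cancellation. Hence, for each residue $r \in \{0, \ldots, m-1\}$, the integer sequence $q \mapsto \dim_K F_{r+qm}/F_{r+qm-1}$ is non-decreasing in $q$ and capped by $\ell$, so it stabilizes at some $d_r^\ast \leq \ell$.

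To build a basis, for each pair $(r,q)$ I would pick a $K$-complement $U_{r,q}$ of $\mathrm{Im}(\phi_{r+(q-1)m})$ inside $F_{r+qm}/F_{r+qm-1}$ (with $U_{r,0} := F_r/F_{r-1}$), choose a $K$-basis of it, and lift each basis vector to an element $b_{r,q,i} \in M$ of degree exactly $r+qm$. Only finitely many are nonzero and their total count telescopes to $\sum_r d_r^\ast \leq m\ell$. Spanning over $K[a]$ is a routine induction on the degree $n$ of $b \in M$: the class of $b$ in $F_n/F_{n-1}$ decomposes as $\phi_{n-m}(\bar c) + \bar u$ with $\bar u$ a $K$-combination of the chosen basis vectors at level $(r,q)$, so $b - ac - u$ lies in $F_{n-1}$ and the inductive hypothesis applies.

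The main obstacle is the $K[a]$-linear independence of the $b_{r,q,i}$. I plan to assume a nontrivial relation $\sum_{r,q,i} p_{r,q,i}(a)\,b_{r,q,i} = 0$ and inspect its top-degree contribution. Since $\chi\bigl(p_{r,q,i}(a)b_{r,q,i}\bigr) \equiv r \pmod m$, terms with distinct $r$ have distinct degrees, and by Lemma \ref{lemAdd} only a single residue $r^\ast$ survives at the top. Writing each surviving $p_{r^\ast,q,i}$ as $\alpha_{q,i}\, a^{Q-q} + (\text{lower terms})$, where $Q$ is fixed by the top degree $Qm+r^\ast$, and passing to $F_{Qm+r^\ast}/F_{Qm+r^\ast-1}$, produces a nontrivial $K$-linear relation among the classes of the $a^{Q-q} b_{r^\ast, q, i}$. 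Iterating the direct-sum decomposition $F_n/F_{n-1} = U_{r,q} \oplus \phi_{n-m}(F_{n-m}/F_{n-m-1})$ down to level zero shows that the images of $U_{r^\ast, q}$ under $Q-q$ applications of $\phi$ are $K$-linearly independent summands of $F_{Qm+r^\ast}/F_{Qm+r^\ast-1}$, forcing all $\alpha_{q,i} = 0$---a contradiction that completes the argument.
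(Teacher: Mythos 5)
Your argument is correct, but it is organized differently from the paper's. The paper builds the basis greedily, choosing each $b_{k+1}$ of minimal pseudo-degree outside the $K[a]$-span of its predecessors, and its key technical tool is the formula \eqref{degreeSum}, $\chi\left(\sum_i \phi_i b_i\right) = \max_i(\chi(\phi_i)+\chi(b_i))$, proved by induction; freeness, spanning and the bound $\ell m$ (at most $\ell$ sequence members per residue class mod $m$) are all read off from that formula. You instead pass to the filtration quotients $F_n/F_{n-1}$, observe that $D(\ell)$ caps their dimension by $\ell$, and exploit the injective multiplication maps $\phi_n\colon F_n/F_{n-1}\to F_{n+m}/F_{n+m-1}$ together with chosen complements $U_{r,q}$ of their images---in effect an associated-graded argument. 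Your checks hold up: absence of zero divisors follows from multiplicativity of $\chi$, the telescoping count per residue gives $\sum_r d_r^\ast \leq \ell m$, the spanning induction on degree is sound, and the independence argument works because an injective map carries the direct-sum decomposition of $F_{n-m}/F_{n-m-1}$ into $F_n/F_{n-1}$, so $F_N/F_{N-1}$ splits as $U_{r^\ast,Q}\oplus\phi(U_{r^\ast,Q-1})\oplus\cdots\oplus\phi^{Q}(U_{r^\ast,0})$ and the leading coefficients of a putative relation must vanish. What the two approaches buy: your graded decomposition makes freeness and spanning essentially transparent linear algebra and even records how the rank is distributed over the residue classes (the stabilized dimensions $d_r^\ast$), at the price of more bookkeeping with complements and lifts; the paper's greedy construction is leaner and, more importantly, delivers the degree formula \eqref{degreeSum} as an explicit by-product, which is then reused in the proof of Theorem \ref{thmComCen} (e.g.\ to compute $\chi(b_ib_j)$ and to see that basis elements have pairwise distinct degrees when $\ell=1$). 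If you wanted your version to feed into that later proof, you would still need to extract the analogous statement from your decomposition, which is possible but not automatic.
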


\begin{proof}

Construct a sequence $b_1,b_,\ldots $ by setting $b_1=1$ and choosing $b_{k+1} \in C_S(a)$ such that $\chi(b_{k+1})$ is minimal subject to the restriction that $b_{k+1}$ does not lie in the $K[a]$-linear span of $\{b_1, \ldots, b_k\}$. We will show later in the proof that such a sequence has at most $lm$ elements. 

We first claim that 
\begin{equation}\label{degreeSum}
\chi \left(\sum_{i=1}^k \phi_i b_i\right) = \max_{i\leq k} (\chi(\phi_i) +\chi(b_i)),
\end{equation} 
for any $\phi_1,\ldots \phi_k \in K[a]$. We show this by induction on $n=\max_{i\leq k} (\chi(\phi_i) +\chi(b_i))$. It is clear that the left-hand side of \eqref{degreeSum} is never greater than the right-hand side. When $n=-\infty$ Equation \eqref{degreeSum} holds since in that case all $\phi_i=0$. If $n=0$, Equation \eqref{degreeSum} holds since $\chi(b)\geq 0$ for all non-zero $b \in C_S(a)$. That $\chi(b) \geq 0$ for all non-zero $b$ in $C_S(a)$ also means that no value of $n$ between $-\infty$ and $0$ is possible. 

For the induction step, assume \eqref{degreeSum} holds when the right-hand side is strictly less than $n$. To verify that it holds for $n$ as well, we can assume without loss of generality  that $\chi(\phi_k) + \chi(b_k)=n$, since if $\chi(\phi_j b_j)<n$ for some term $\phi_j b_j$ we can drop it without affecting either side of \eqref{degreeSum}, by Lemma \ref{lemAdd}. If $\phi_k \in K$ then $\chi(\phi_k)=0$, by Remark \ref{remScalarDeg}, and thus $\chi(b_k)=n$. By the choice of $b_k$ it then follows that $\chi(\sum_{i=1}^k \phi_i b_i) \geq n$, as otherwise $\sum_{i=1}^k \phi_i b_i$ would have been picked instead of $b_k$. If $\phi_k \notin K$, then $\chi(b_k) <n$ and thus $\chi(b_i) < n$ for $i=1,\ldots k$. Let $r_1,\ldots,r_k \in K$ and $\xi_1,\ldots,\xi_k \in K[a]$ be such that 
$\phi_i = a\xi_i +r_i$ for $i=1,\ldots,k$. We have $\chi(\sum_{i=1}^k r_i b_i) <n$ and thus by Lemma \ref{lemAdd} and the assumptions on $\chi$ we get
\begin{equation*}
\chi\left(\sum_{i=1}^k \phi_i b_i \right) = \chi\left(\sum_{i=1}^k a\xi_i b_i +\sum_{i=1}^k r_ib_i \right) = \chi\left(a \sum_{i=1}^k \xi_i b_i \right) = m+\chi\left(\sum_{i=1}^k \xi_i b_i \right).
\end{equation*} 
We also have that $\max_{i\leq k} (\chi(\phi_i) +\chi(b_i)) = m +\max_{i\leq k} (\chi(\xi_i) +\chi(b_i))$.  By the induction hypothesis 
\begin{equation*}
\chi\left(\sum_{i=1}^k \xi_i b_i \right) = \max_{i\leq k} (\chi(\xi_i) +\chi(b_i)),
\end{equation*}
which completes the induction step. 

We now show that if $\chi(b_i) = \chi(b_j)$ for some $i\leq j$ then $j-i<l.$ Suppose $b_{1} ,\ldots,b_{l+1} $ all are mapped to zero by $\chi$. Then there exists $\alpha_1,\ldots,\alpha_{l+1}$, not all zero, such that 
\begin{equation*}
\chi\left(\sum_{i=1}^{l+1} \alpha_i b_i \right) <0,
\end{equation*}
which is impossible since $ \sum_{i=1}^{l+1} \alpha_i b_i  \in C_S(a)$. 

Suppose now instead that $b_j,\ldots,b_{j+l}$ are all mapped to the same positive integer, $q$, by $\chi$. Then there exists $\alpha_j,\ldots,\alpha_{j+l} \in K$, not all zero, such that 
\begin{equation*}
\chi\left( \sum_{i=j}^{j+l} \alpha_i b_i \right) < q.
\end{equation*}

But this contradicts \eqref{degreeSum}

%But this is only possible if some $b_i$ lies in the $K[a]$-linear span of the previous %terms of the sequence, which contradicts the way it was chosen. 

It remains only to show that the sequence $(b_i)$ contains only $lm$ elements. We will prove that every residue class $(\mod m$) can only contain at most $l$ elements. Suppose to the contrary, that we had elements $c_1,\ldots, c_{l+1}$, belonging to the sequence $(b_i)$ and all satisfying that $ \chi(c_i) = n \imod{m}$. Set $k=\max_{1\leq i \leq l+1} (\chi(c_i))$ and define $\gamma_i = a^{\frac{k-\chi(c_i)}{m}}.$ Then $\chi(\gamma_i c_i) = k$, for all $i \in \{1, \ldots, l+1\}$, which implies that there exists  $\alpha_1,\ldots,\alpha_{l+1} \in K$, such that 
\begin{equation*}
\chi\left( \sum_{i=j}^{j+l} \alpha_i \gamma_i c_i \right) < k.
\end{equation*}
But this once again contradicts \eqref{degreeSum}.  
 
\end{proof}

We can also prove a result on the algebraic dependence of pairs of commuting elements.

\begin{corollary}\label{corAlgDep}
Let $S$ be a $K$-algebra with a pseudo-degree function, $\chi$. Let $a \in S$ be such that $C_S(a)$ satisfies Condition $D(l)$ for some $l>0$. Let $b$ be any element in $C_S(a)$. Then there exists a nonzero polynomial $P(s,t)\in K[s,t]$ such that $K(a,b) =0$. (Note that $K(a,b)$ is well-defined when $a,b$ commute.)
\end{corollary}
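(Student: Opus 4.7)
The plan is to reduce this corollary to Theorem~\ref{thm_BoundDim} by playing off the rank bound against the powers of $b$. Set $m = \chi(a)$. Since $a \in C_S(a)$ and Condition $D(\ell)$ forces $\chi(c) \geq 0$ for every nonzero $c \in C_S(a)$, we have $m \geq 0$. The main case is $m > 0$; the case $m = 0$ is degenerate and I will dispatch it separately at the end.

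Assuming $m > 0$, Theorem~\ref{thm_BoundDim} provides a $K[a]$-module basis $b_1, \ldots, b_k$ of $C_S(a)$ with $k \leq \ell m$. The element $b$ commutes with $a$, so each of $1, b, b^2, \ldots, b^k$ lies in $C_S(a)$; that is $k+1$ elements in a free $K[a]$-module of rank $k$, hence they must be $K[a]$-linearly dependent. Choose $\phi_0, \ldots, \phi_k \in K[a]$, not all zero, with $\sum_{i=0}^k \phi_i b^i = 0$, write each $\phi_i = p_i(a)$ for some $p_i(s) \in K[s]$, and set
\begin{equation*}
P(s,t) = \sum_{i=0}^k p_i(s)\, t^i \in K[s,t],
\end{equation*}
so that $P(a,b) = 0$ by construction.

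The step I expect to be the main subtlety is arguing that $P$ is nonzero as an abstract polynomial, and not merely that $\sum_i \phi_i b^i = 0$ as an element of $S$. To see this, note that $\chi(a^j) = jm$ yields pairwise distinct pseudo-degrees for the powers of $a$, so by Lemma~\ref{lemAdd} the elements $1, a, a^2, \ldots$ are $K$-linearly independent in $S$. Hence the evaluation map $K[s] \to K[a]$, $s \mapsto a$, is injective, so at least one $p_i(s)$ is a nonzero polynomial, and therefore $P \neq 0$ in $K[s,t]$.

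Finally, for the residual case $m = 0$: if $a = 0$ take $P(s,t) = s$; otherwise apply Condition $D(\ell)$ to the $\ell+1$ elements $1, a, \ldots, a^\ell \in C_S(a)$, all of pseudo-degree $0$, to obtain $\alpha_0, \ldots, \alpha_\ell \in K$, not all zero, with $\chi\bigl(\sum_{i=0}^\ell \alpha_i a^i\bigr) < 0$; nonnegativity of $\chi$ on nonzero elements of $C_S(a)$ then forces $\sum_{i=0}^\ell \alpha_i a^i = 0$, so the annihilating polynomial $P(s,t) = \sum_{i=0}^\ell \alpha_i s^i$ works.
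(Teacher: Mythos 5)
Your proof is correct and follows essentially the same route as the paper: invoke the rank bound of Theorem~\ref{thm_BoundDim}, deduce a $K[a]$-linear dependence among $1, b, \ldots, b^k$, and repackage it as a polynomial $P(s,t)$ with $P(a,b)=0$. The extra care you take—checking that $K[s]\to K[a]$ is injective so that $P\neq 0$ as an abstract polynomial, and handling the degenerate case $\chi(a)=0$ separately—only makes explicit what the paper's terser proof leaves implicit.
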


\begin{proof}

Since $C_S(a)$ has finite rank as a $K[a]$-module the elements $b,b^2,\ldots$ can not all be linearly independent over $K[a]$. Thus there exists $f_1(x),\ldots, f_k(x) \in K[x]$ , not all zero,  such that $\sum_{i=0}^k f_i(a) b^i =0$. Then $P(s,t) = \sum_{i=0}^k f_i(s) t^i =0$ is a polynomial with the desired property. 

\end{proof}

We can also prove a result asserting that certain centralizers are commutative, though for that we need to assume that $C_S(a)$ satisfies condition $D(1)$. 

\begin{theorem}\label{thmComCen}
Let $K$ be a field and suppose $S$ is a $K$-algebra. Let $S$ have a pseudo-degree function, $\chi$. If $a\in S$ satisfies $\chi(a)=m>0$ and $C_S(a)$ satisfies condition $D(1)$ then: 
\begin{enumerate}
\item $C_S(a)$ has a finite basis as a $K[a]$-module, the cardinality of which divides $m$.
\item $C_S(a)$ is a commutative algebra. 
\end{enumerate}
\end{theorem}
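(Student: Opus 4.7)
The plan is as follows. First, I would apply Theorem \ref{thm_BoundDim} with $\ell=1$ to obtain a finite $K[a]$-basis $b_1 = 1, b_2, \ldots, b_k$ of $C_S(a)$ with $k \leq m$; as the final step of the proof of Theorem \ref{thm_BoundDim} shows when $\ell=1$, the values $\chi(b_1), \ldots, \chi(b_k)$ lie in pairwise distinct residue classes modulo $m$. Put $G := \{\chi(b_i) \bmod m : 1 \leq i \leq k\} \subseteq \Z/m\Z$, so $|G| = k$.

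For part (1), the task reduces to showing that $G$ is a subgroup of $\Z/m\Z$, since then $k$ divides $m$. We have $0 \in G$ via $b_1$. For closure under addition, fix $i, j$ and expand $b_i b_j \in C_S(a)$ as $\sum_l \phi_l b_l$ with $\phi_l \in K[a]$; since $\chi(b_i b_j) = \chi(b_i) + \chi(b_j)$, equation \eqref{degreeSum} yields some $l$ with $\chi(\phi_l) + \chi(b_l) = \chi(b_i) + \chi(b_j)$. But every nonzero element of $K[a]$ has $\chi$-value a nonnegative multiple of $m$, so reducing modulo $m$ gives $\chi(b_l) \equiv \chi(b_i) + \chi(b_j) \pmod m$. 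Hence $G$ is closed under addition, and a nonempty finite subset of a group that is closed under the group operation is a subgroup.

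For part (2), my strategy is to first establish commutativity of the associated graded algebra and then propagate it. Put $F_n := \{c \in C_S(a) : \chi(c) \leq n\}$ and $\mathrm{gr}\, C_S(a) := \bigoplus_n F_n/F_{n-1}$; $D(1)$ makes each piece at most one-dimensional over $K$, and multiplicativity of $\chi$ makes $\mathrm{gr}\, C_S(a)$ a domain. Hence for nonzero homogeneous $\bar c, \bar d$ of degrees $n, n'$ there is a unique $\lambda(n, n') \in K^{\times}$ with $\bar c \bar d = \lambda(n, n') \bar d \bar c$; this $\lambda$ is bimultiplicative, satisfies $\lambda(n, n) = 1$, and also $\lambda(m, -) \equiv \lambda(-, m) \equiv 1$ because $\bar a$ is central in $\mathrm{gr}\, C_S(a)$. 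Consequently $\lambda$ descends to a bicharacter on the cyclic group $G$ with $\lambda(x, x) = 1$, and on a cyclic group any such bicharacter is identically $1$. So $\mathrm{gr}\, C_S(a)$ is commutative, which by matching leading coefficients of $b_i b_j$ and $b_j b_i$ in the basis yields $b_i b_j = b_j b_i + r_{ij}$ with $\chi(r_{ij}) < \chi(b_i) + \chi(b_j)$ for every $i, j$.

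The main obstacle is upgrading this leading-term vanishing to $r_{ij} = 0$. The plan is an induction on $\chi$ that uses associativity together with the one-dimensionality of each $F_n/F_{n-1}$. Concretely, pairs $(i, j)$ for which the residue $\chi(b_i) + \chi(b_j) \bmod m$ is witnessed by a basis element $b_{l^*}$ whose $\chi$ is the minimum of $\chi(b_i) + \chi(b_j)$ in its residue class give $r_{ij} = 0$ immediately, since there is no room in $C_S(a)$ below $b_{l^*}$ in that residue to absorb a correction. These first commutation relations then feed into the associativity identities $(b_i b_j) b_l = b_i (b_j b_l)$ whose expansions via $b_i b_j = b_j b_i + r_{ij}$ and the centrality of $K[a]$ pin down the coefficients of the remaining $r_{i'j'}$'s in the basis, one filtration level at a time. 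Once $[b_i, b_j] = 0$ for every pair of basis elements, $K[a]$-bilinearity of the commutator extends commutativity to all of $C_S(a)$.
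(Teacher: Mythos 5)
Your part (1) is correct and essentially identical to the paper's argument: distinct residues of $\chi(b_i)$ modulo $m$, closure of $G$ under addition via \eqref{degreeSum} and the fact that nonzero elements of $K[a]$ have $\chi$-value a multiple of $m$. Your graded-commutativity step in part (2) is also sound: with one-dimensional filtration quotients and multiplicativity of $\chi$, the commutation factor $\lambda$ is a well-defined bicharacter on the cyclic group $G$ with $\lambda(x,x)=1$, hence trivial, so $\chi(b_ib_j-b_jb_i)<\chi(b_i)+\chi(b_j)$ for all $i,j$.

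The gap is in upgrading this to $r_{ij}=0$. Your claimed base case is based on a false premise: the commutator $r_{ij}=b_ib_j-b_jb_i$ is merely some element of $C_S(a)$ of degree strictly below $\chi(b_i)+\chi(b_j)$; there is no grading forcing it to lie in the residue class of $\chi(b_i)+\chi(b_j)$ modulo $m$. Writing $b_ib_j=\sum_l\phi_lb_l$ and $b_jb_i=\sum_l\psi_lb_l$, graded commutativity only pins down the top coefficient of $\phi_{l^*}-\psi_{l^*}$ for the single index $l^*$ in the right residue class; the coefficients $\phi_l-\psi_l$ for $l\neq l^*$ are completely unconstrained by this argument, so even when $\chi(b_i)+\chi(b_j)=\chi(b_{l^*})$ you cannot conclude $r_{ij}=0$. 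The subsequent induction ``one filtration level at a time'' via associativity is only a plan, and it is unclear it can close using purely filtered/graded bookkeeping: graded commutativity of a filtered domain does not in general imply commutativity (think of the Weyl algebra), so some additional global input is needed. The paper supplies exactly that via Amitsur's trick: since $G$ is cyclic, the commutative subalgebra $E$ generated by $a$ and one basis element $b_{i_0}$ (with $\chi(b_{i_0})$ generating $G$) has finite $K$-codimension in $C_S(a)$; because $K[a]$ is infinite-dimensional while $\End_K(C_S(a)/E)$ is finite-dimensional, some nonzero $\phi\in K[a]$ satisfies $\phi\, C_S(a)\subseteq E$, and then $\phi^2c_1c_2=(\phi c_1)(\phi c_2)=(\phi c_2)(\phi c_1)=\phi^2c_2c_1$, so commutativity follows by cancellation in the domain $C_S(a)$ (note $\chi$ being a pseudo-degree function already forces $S$, hence $C_S(a)$, to be a domain). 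Your proof needs this, or some comparable global argument, to be complete.
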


\begin{proof}

By Theorem \ref{thm_BoundDim} it is clear that there is a subset $H$ of $\{1, \ldots, m\}$
and elements $(b_i)_{i \in H}$ such that the $b_i$ form a basis for $C_S(a)$. By the proof of Theorem \ref{thm_BoundDim} it is also clear that $\chi(b_i) \neq \chi(b_j)$ if $i \neq j$. Without loss of generality we can assume $\chi(b_i)=i$ for all $i  \in H$. We can map $H$ into $\Z_m$ in a natural way. Denote the image by $G$. We want to show $G$ is a subgroup, for which it is enough to show that it is closed under addition. 

Suppose $g,h \in G$. There exists $i,j \in H$, with $i \equiv g \imod{m}$ and $j \equiv h \imod{m}$. We can write $b_ib_j= \sum_{k\in H} \phi_k b_k$, for some $\{b_k \}$. It follows that 
\begin{equation*}
g+h \equiv i+j = \chi(b_ib_j)  = \max (\chi(\phi_k)+\chi(b_k)) \equiv \chi(b_k ) =k \imod{m} 
\end{equation*}
 for some $k \in H$. 

Since $G$ is a subgroup of $\Z_m$ it is clear that the cardinality of $G$, which is also the cardinality of $H$, must divide $m$. 

$G$ is cyclic. Let $g$ be a generator of $G$. Consider the algebra generated by $b_i$ and $a$, where $i \equiv g \imod{m}$. It is a commutative algebra and a sub-$K$-vector space of $C_S(a)$. Denote it by $E$. If $c$ is any element of $C_S(a)$ we can write $c=e+f$, where $e \in E$ and $\chi(f) <mi$, since if $\chi(c)\geq mi$ then there exists $k \leq m$ and $j \in \N$ such that $\chi(a^j b_i^k)=\chi(c)$ and thus there exists $\alpha \in K$ such that $\chi(c-\alpha a^j b_i^k) < \chi(c)$. 

Thus the quotient $C_S(a)/E$ is finite-dimensional. Each $f\in K[a]$ gives rise to an endomorphism on $C_S(a)/E$, by the action of multiplication by $f$. Since $K[a]$ is infinite-dimensional and the endomorphism ring of $C_S(a)/E$ is finite-dimensional, there is some nonzero $\phi \in K[a]$ that induces the zero endomorphism. But this means that $\phi c \in E$ for any $c \in C_S(a)$. 

Now let $c_1,c_2$ be two arbitrary elements of $C_S(a)$. Since $E$ is commutative, and everything in $C_S(a)$ commutes with $\phi$, it follows that 
\begin{equation*}
\phi^2 c_1 c_2 = \phi c_1 \cdot \phi c_2=  \phi c_2 \cdot \phi c_1 = \phi^2 c_2 c_1.
\end{equation*} 
Since $C_S(a)$ is a domain it follows that $c_1c_2=c_2c_1$ and thus that $C_S(a)$ is commutative. 
\end{proof}

\section{Examples}

Theorems \ref{thm_HS1},\ref{thm_HS2} and \ref{thm_HS3} follow from our results combined with Lemma 2.2 and Lemma 2.4 in \cite{ergodipotent}. But our results can also be applied in certain situations that are not covered by the results in \cite{ergodipotent}. 

\begin{proposition}
Let $K$ be a field. Set $R=K[y]$, let $\sigma$ be an endomorphism of $R$ such that $s=\deg_y(\sigma(y))>1$ and let $\delta$ be a $\sigma$-derivation. Form the Ore extension $S=\OreGen$. If $a \in S \setminus K$ then $C_S(a)$ is a free $K[a]$-module of finite rank and a commutative subalgebra of $S$.  

\end{proposition}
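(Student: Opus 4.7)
The plan is to apply Theorem \ref{thmComCen} with the pseudo-degree function $\chi(b) = \deg_x(b)$ (and $\chi(0) = -\infty$). Since $S$ is a $K$-algebra one has $\sigma|_K = \identity$ and $\delta|_K = 0$, so $\sigma$ is an injective $K$-algebra endomorphism of $R$; consequently $S$ is a domain and $\chi$ is a pseudo-degree function with $\chi(b) \geq 0$ for every nonzero $b \in S$.

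I would first dispose of the case $a \in K[y] \setminus K$ by hand. If $b = \sum_{j=0}^n b_j x^j \in C_S(a)$ has $b_n \neq 0$ and $n \geq 1$, comparing leading $x$-coefficients in $ab = ba$ gives $ab_n = b_n \sigma^n(a)$, so $a = \sigma^n(a)$ in $K[y]$; but $\deg_y \sigma^n(a) = s^n \deg_y(a) > \deg_y(a)$, a contradiction. Thus $C_S(a) = K[y]$, which is a commutative free $K[a]$-module of rank $\deg_y(a)$ (with basis $1, y, \ldots, y^{\deg_y(a)-1}$).

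In the remaining case $d := \deg_x(a) \geq 1$, so $\chi(a) > 0$, I would verify condition $D(1)$ for $C_S(a)$. Take $b_1, b_2 \in C_S(a)$ of common $x$-degree $n$ with leading $x$-coefficients $\beta_1, \beta_2 \in K[y] \setminus \{0\}$, and let $a_d$ denote the leading $x$-coefficient of $a$. Comparing coefficients of $x^{n+d}$ in $ab_i = b_i a$ yields $a_d \sigma^d(\beta_i) = \beta_i \sigma^n(a_d)$ (the $\delta$-contributions land in lower $x$-powers). Cross-multiplying the two equations and cancelling $a_d$ in the domain $K[y]$ gives $\beta_2 \sigma^d(\beta_1) = \beta_1 \sigma^d(\beta_2)$, so $\beta_1/\beta_2 \in K(y)$ is fixed by $\sigma^d$.

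The main obstacle is to show that the fixed subfield $K(y)^{\sigma^d}$ equals $K$; once this is done, $\beta_1, \beta_2$ are $K$-proportional, some $\alpha_1, \alpha_2 \in K$ not both zero satisfy $\chi(\alpha_1 b_1 + \alpha_2 b_2) < n$, so $D(1)$ holds and Theorem \ref{thmComCen} completes the proof. For the fixed-field claim, I would use that $\sigma$ acts on $K(y)$ by $y \mapsto \sigma(y)$, so $\sigma(K(y)) = K(\sigma(y))$ has index $s$ in $K(y)$. Any $f$ fixed by $\sigma^d$ therefore lies in $\bigcap_{k \geq 0} K(\sigma^{kd}(y))$, so for each $k$ there is $g_k \in K(t)$ with $f = g_k(\sigma^{kd}(y))$; the relation $g_k = g_{k+1} \circ \sigma^d$ (viewing $\sigma^d(y)$ as a polynomial in one variable) yields $\deg g_0 = s^{kd} \deg g_k$ for every $k$. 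Since $s > 1$ and $\deg g_k \in \Z_{\geq 0}$, this forces $\deg g_0 = 0$, i.e., $f \in K$, as required.
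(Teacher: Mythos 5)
Your proof is correct and follows the paper's overall strategy --- dispose of the case $a \in K[y]\setminus K$ separately, take $\chi=\deg_x$ as the pseudo-degree function, extract the leading-coefficient identity $a_d\sigma^d(\beta)=\beta\sigma^n(a_d)$ from $ab=ba$, verify condition $D(1)$, and invoke Theorem \ref{thmComCen} --- but you finish the crucial step by a genuinely different route. The paper simply equates $\deg_y$ of the two sides of that identity: since $s^d>1$, the relation $\deg_y(a_d)+s^d\deg_y(\beta)=\deg_y(\beta)+s^n\deg_y(a_d)$ determines $\deg_y(\beta)$ uniquely, and a $K$-subspace of $K[y]$ all of whose nonzero elements have the same degree is at most one-dimensional, which is exactly $D(1)$. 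You instead divide the identities for $\beta_1,\beta_2$ to conclude that $\beta_1/\beta_2$ lies in the fixed field of $\sigma^d$ acting on $K(y)$, and then show that this fixed field is $K$ via the iterated-composition argument ($f=g_k(\sigma^{kd}(y))$ and $\deg g_0=s^{kd}\deg g_k$); this is sound (multiplicativity of the degree of rational functions under composition, and $\sigma^{kd}$ commuting with polynomials over $K$ because $\sigma|_K=\identity$ and $\delta|_K=0$, which you rightly make explicit), but it is heavier machinery than the paper's two-line degree count. In exchange, your version isolates a reusable structural fact (the fixed field of $\sigma^d$ on $K(y)$ is $K$ whenever $\deg_y\sigma(y)>1$), and you also supply details the paper only asserts: the computation showing $C_S(a)=K[y]$ when $a\in K[y]\setminus K$, and the explicit $K[a]$-basis $1,y,\ldots,y^{\deg_y(a)-1}$ in that case.
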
 

\begin{proof}
If $a \in K[y] \setminus K$ then $C_S(a) =K[y]$ and the claim is true. So suppose that $a \notin K[y]$. We shall apply Theorem \ref{thmComCen}. To do so we need a pseudo-degree function. 

The notion of the degree of an element in $S$ with respect to $x$ was defined in the introduction of this article. Denote the degree of an element $b$ by $\chi(b)$. It is easy to see that $\chi$ satisfies all the requirement to be a pseudo-degree function. We proceed to show that $C_S(a)$ satisfies condition $D(1)$. Certainly it is true that $\chi(b)\geq 0$ for all nonzero $b \in C_S(a)$. 

Let $b$ be a nonzero element of $S$ that commutes with $a$, such that $\chi(b) =n$. Suppose $\chi(a) =m$. By equating the highest order coefficient of $ab$ and $ba$ we find that 
\begin{equation}\label{eqHigCoe}
a_m \sigma^m (b_n) = b_n \sigma^n (a_m), 
\end{equation}
where $a_m$ and $b_n$ denote the highest order coefficients of $a$ and $b$, respectively. (Recall that these are polynomials in $y$.) We equate the degree in $y$ of both sides of \eqref{eqHigCoe} and find that 
\begin{equation*}
\deg_y(a_m) +s^m \deg_y(b_n) = \deg_y(b_n) +s^n \deg_y (a_m),
\end{equation*}
which determines the degree of $b_m$ uniquely. It follows that the solutions of \eqref{eqHigCoe} form a $K$-sub space of $K[y]$ that is at most one-dimensional. This in turn implies that condition $D(1)$ is fulfilled. 

We have now verified all the hypothesis necessary to apply Theorem \ref{thmComCen}. 
\end{proof}

\section*{Acknowledgements}

This work was performed, in part, while the author was employed at Lund University and in part while the author was employed at M\"{a}lardalen University. 

The author wishes to thank Lars Hellstr\"{o}m and Johan \"{O}inert for helpful discussions.

\bibliographystyle{amsplain}

\bibliography{General}

\end{document}